\theoremstyle{definition}
\newtheorem{Unity}{Unity}[section]
\newtheorem{dfn}[Unity]{Definition}
\newtheorem{exam}[Unity]{Example}
\newtheorem{case}{Case}
\newtheorem*{ack}{Acknowledgements}
\theoremstyle{plain}
\newtheorem{thm}[Unity]{Theorem}
\newtheorem{prop}[Unity]{Proposition}
\newtheorem{conj}[Unity]{Conjecture}
\newtheorem{lem}[Unity]{Lemma}
\let\oldenumerate\enumerate
\renewcommand{\enumerate}{
\oldenumerate
\setlength{\itemsep}{3pt}
\setlength{\leftskip}{-13pt}
}
\let\olditemize\itemize
\renewcommand{\itemize}{
\olditemize
\setlength{\itemsep}{2pt}
\setlength{\leftskip}{-15pt}
}
\begin{document}

\title{An invariant for embedded Fano manifolds covered by linear spaces}
\author{Taku Suzuki}
\keywords{Fano manifolds, families of lines, covered by linear spaces.}
\subjclass[2010]{14J45, 14M20.}
\address{Department of Mathematics, School of Fundamental Science and Engineering, Waseda University, 3-4-1 Ohkubo, Shinjuku, Tokyo 169-8555, Japan}
\email{s.taku.1231@moegi.waseda.jp}

\maketitle

\begin{abstract}
For an embedded Fano manifold $X$, we introduce a new invariant $S_X$ related to the dimension of covering linear spaces. 
The aim of this paper is to classify Fano manifolds $X$ which have large $S_X$. 
\end{abstract}

\section{Introduction}

We consider an embedded Fano manifold $X$, i.e., a smooth complex projective variety $X \subset \mathbb{P}^N$ with ample anti-canonical line bundle $-K_X$. 
If $X$ is covered by lines, then we consider a family $H_1$ of lines on $X$ through a fixed general point $x$. 
We can regard $H_1$ as a subvariety of $\mathbb{P}(T_{x} X^{\vee})$. 
If $H_1$ is also a Fano manifold covered by lines, then we can consider a family $H_2$ of lines on $H_1$ through a fixed general point. 
In this way, we inductively define an \textit{$m$-th order family $H_m$ of lines}. 
In addition, we define an invariant $S_X$ as the greatest number $m$ such that there is an $m$-th order family of lines. 
As shown later, any Fano manifold $X$ satisfies $S_X \le {\rm dim}\,X$, and $X$ is covered by linear spaces of dimension at least $S_X$. 
In particular, $S_X = {\rm dim}\,X$ holds if and only if $X$ is a linear space. 

The aim of this paper is to classify Fano manifolds $X$ of Picard number $1$ which have large $S_X$. 
We will prove the following theorem: 

\begin{thm}\label{MainThm}
Let $X \subset \mathbb{P}^N$ be a Fano manifold of Picard number $1$ and dimension $n \ge 2$. 
\begin{enumerate}
\item If $S_X > \frac{n}{2}$, then $X$ is a linear space $\mathbb{P}^n$. 
\item If $S_X = \frac{n}{2}$, then $X$ is isomorphic to either a quadric hypersurface $Q^{2m}$ or the Grassmannian $G(2,\mathbb{C}^{m+2})$ $(n=2m \ge 4)$. 
\item Assume that Conjecture \ref{Conj} is true.  If $S_X = \frac{n-1}{2}$, then $X$ is isomorphic to one of the following: 
\begin{itemize}
\item[(a)] a quadric hypersurface $Q^{2m+1}$ $(n=2m+1)$;
\item[(b)] the symplectic Grassmannian $SG(2,\mathbb{C}^{m+3})$ $(n=2m+1)$;
\item[(c)] a cubic hypersurface in $\mathbb{P}^4$ $(n=3)$; 
\item[(d)] a complete intersection of two quadric hypersurfaces in $\mathbb{P}^5$ $(n=3)$; 
\item[(e)] a $3$-dimensional linear section of the Grassmannian $G(2,\mathbb{C}^{5})$ in $\mathbb{P}^9$ $(n=3)$. 
\end{itemize}
\end{enumerate}
\end{thm}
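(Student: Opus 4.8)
The plan is to argue by induction on $n=\dim X$, using the basic recursion $S_X=1+S_{H_1}$, where $H_1$ is the family of lines through a general point $x\in X$ (a component of the Hilbert scheme of lines through $x$), viewed as a subvariety of $\mathbb{P}(T_xX^\vee)\cong\mathbb{P}^{n-1}$, of dimension $p:=(-K_X\cdot\ell)-2$ for a general line $\ell$. Three inputs drive the argument. First, $p\le n-1$ with equality only for $\mathbb{P}^n$, so $X\ne\mathbb{P}^n$ forces $\dim H_1\le n-2$. Second, a linear subspace $\mathbb{P}^j\subseteq H_1$ is swept out by a $\mathbb{P}^{j+1}\subseteq X$ through $x$, so the combinatorics of linear subspaces of $H_1$ governs that of $X$; in particular $X$ is covered by $\mathbb{P}^{S_X}$'s, and, for Picard number one, $H_1$ is linearly nondegenerate in $\mathbb{P}^{n-1}$. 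Third — and this is unavoidable because the iterated families $H_1,H_2,\dots$ leave the Picard-rank-one world — I will establish, within the same induction, a \emph{companion classification} of Fano manifolds $Y$ of \emph{arbitrary} Picard number whose invariant is close to maximal: $S_Y=\dim Y$ only for $Y=\mathbb{P}^{\dim Y}$, and $S_Y=\dim Y-1$ only for a short explicit list headed by the Segre variety $\mathbb{P}^1\times\mathbb{P}^{\dim Y-1}$.

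For part (1), if $S_X>\frac n2$ then, since $\dim H_1\le n-2$, we get $S_{H_1}=S_X-1>\frac n2-1\ge\frac{\dim H_1}{2}$, so $H_1$ has invariant exceeding half its dimension. By the companion classification in the smaller dimension $\dim H_1<n$, $H_1$ is then $\mathbb{P}^{\dim H_1}$ or a Picard-rank $\ge 2$ variety near it; but a positive-dimensional \emph{linear} variety of minimal rational tangents is impossible on a Picard-rank-one Fano other than $\mathbb{P}^n$ (by nondegeneracy of the VMRT), and the higher-Picard alternatives are excluded as possible VMRT of a Picard-rank-one $X$ for the same reason. Tracing back, $H_1=\mathbb{P}^{n-1}$, i.e.\ $X=\mathbb{P}^n$. (Alternatively, one may invoke the known classification of Picard-rank-one manifolds covered by linear subspaces of more than half their dimension.)

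For part (2), write $n=2m$, $S_X=m$, so $S_{H_1}=m-1$ and $\dim H_1=p\le 2m-2$. If $p=2m-2$, then $S_{H_1}=\frac{\dim H_1}{2}$ and $H_1$ is moreover a hypersurface in $\mathbb{P}^{2m-1}$; the inductive form of the theorem leaves only $H_1\cong Q^{2m-2}$ (the quadric being the unique member of the allowed list realizable as such a hypersurface), and a Cartan--Fubini / Hwang--Mok type recognition for quadratic VMRT gives $X=Q^{2m}$. If $p<2m-2$ (so $m\ge 3$), then $S_{H_1}=m-1>\frac{\dim H_1}{2}$; the linear option for $H_1$ is excluded as in part (1), so $\rho(H_1)\ge 2$, and the companion classification intersected with the projective-geometric constraints of a nondegenerate embedding $H_1\subseteq\mathbb{P}^{2m-1}$ of a variety of dimension $m$ with $S=m-1$ forces $H_1\cong\mathbb{P}^1\times\mathbb{P}^{m-1}$ Segre-embedded; the VMRT characterization of the Grassmannian then yields $X=G(2,\mathbb{C}^{m+2})$.

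Part (3) follows the same template with $n=2m+1$, $S_X=m$, $\dim H_1=p\le 2m-1$. The case $p=2m-1$ identifies $H_1$, via the inductive classification (using for small dimensions the classification of Fano threefolds), with an odd quadric $Q^{2m-1}$, a cubic threefold, a $(2,2)$-complete intersection in $\mathbb{P}^5$, or a linear section $G(2,\mathbb{C}^5)\cap\mathbb{P}^9$, and recognition recovers $X=Q^{2m+1}$ or, for $n=3$, cases (c)--(e) directly. The case $p<2m-1$ produces, by the same Picard-rank $\ge 2$ analysis as in part (2), the variety of minimal rational tangents of the symplectic Grassmannian $SG(2,\mathbb{C}^{m+3})$; recovering $X$ from this VMRT is the recognition statement not currently available in general, isolated as Conjecture \ref{Conj}, which is why part (3) is conditional (and unconditional only for $n=3$). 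The main obstacle throughout is precisely this recognition step — reconstructing $X$ from the isomorphism and projective-equivalence type of $H_1$: classical Cartan--Fubini for $\mathbb{P}^n$ and quadrics, the Hwang--Mok VMRT characterization for $G(2,\mathbb{C}^{m+2})$, and no available theorem for $SG(2,\mathbb{C}^{m+3})$, whence the conditional hypothesis. A secondary difficulty is keeping the induction coherent across the jump $\rho(X)=1\rightsquigarrow\rho(H_1)\ge 2$, which is what compels the companion classification of $S_Y\in\{\dim Y,\dim Y-1\}$ to be developed in tandem.
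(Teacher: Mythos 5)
Your parts (1) and (2) are fine, but only via the route you mention parenthetically: the paper's proof is exactly ``$X$ is covered by linear spaces of dimension at least $S_X$ (Proposition \ref{Prop1}) plus Sato's theorem (Theorem \ref{Thm1})'', and since $\rho_X=1$ kills the linear-bundle case this is immediate. Your primary inductive argument for (1)--(2) is shakier than this: the ``companion classification'' you propose only covers $S_Y\ge \dim Y-1$, which is not what $S_{H_1}>\frac{\dim H_1}{2}$ gives you, and your claim that a positive-dimensional linear VMRT is impossible on a Picard-rank-one Fano other than $\mathbb{P}^n$ is too strong (Lemma \ref{Lem5} only bounds its dimension by $\frac{n-4}{2}$; that bound happens to suffice in the relevant range, but that is the argument you would need to make).

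The genuine gaps are in part (3). First and most seriously, your dichotomy ``$p=2m-1$ versus $p<2m-1$, and the latter produces the VMRT of $SG(2,\mathbb{C}^{m+3})$'' is false as stated: for every $m\ge 2$ a cubic hypersurface in $\mathbb{P}^{2m+2}$ and a $(2,2)$ complete intersection in $\mathbb{P}^{2m+3}$ have Picard number $1$, are covered by lines, have $\dim H_1=n-3=2m-2<2m-1$ with $H_1$ non-degenerate, and pass every first-order test you impose. They are excluded only by descending the chain: $H_1$ is a $(2,3)$ (resp.\ $(2,2)$) complete intersection, so $H_2$ either fails to exist ($m=2$) or has $\dim H_1-\dim H_2=4$ (resp.\ $3$), making the chain too short to reach length $m$. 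This second-order computation is the core of the paper's Case 1 (via the bound $n_0-n_1\le 3$ and Lemma \ref{Lem2}) and is entirely absent from your sketch; without it your list for (3) would wrongly contain cubics and $(2,2)$ intersections of all odd dimensions. Second, to pass from ``$\rho(H_1)\ge 2$'' to applicability of your companion classification you need $S_{H_1}=m-1\ge \dim H_1-1$, i.e.\ $\dim H_1\le m$; the paper obtains this from Barth--Larsen (a smooth subvariety of $\mathbb{P}^{2m}$ of dimension $>m$ has Picard number $1$), an input you never invoke. Third, the companion classification (Proposition \ref{Prop2}) outputs families with a parameter $d\ge 1$; forcing $d=1$ requires the secant-variety computation of Lemma \ref{Lem6}, since for $d\ge 2$ these varieties admit no isomorphic projection into $\mathbb{P}^{2m}$. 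You correctly isolate the roles of non-degeneracy (to kill the Segre case) and of Conjecture \ref{Conj} (the recognition step for $SG(2,\mathbb{C}^{m+3})$), but the three omissions above are where the actual proof lives.
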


\begin{conj}\label{Conj}
Let $X \subset \mathbb{P}^N$ be a Fano manifold of Picard number $1$ which is covered by lines. 
Assume that a family of lines on $X$ through $x$ embedded in $\mathbb{P}(T_{x} X^{\vee})$ is projectively equivalent to $\mathbb{P}(\mathscr{O}_{\mathbb{P}^1}(2) \oplus \mathscr{O}_{\mathbb{P}^1}(1)^{m-1})$ embedded in $\mathbb{P}^{2m}$ by the tautological line bundle for a general point $x \in X$. 
Then $X$ is isomorphic to the symplectic Grassmannian $SG(2,\mathbb{C}^{m+3})$. 
\end{conj}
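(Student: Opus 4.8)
We sketch a possible approach to Conjecture~\ref{Conj}. The statement is a \emph{recognition theorem}: one must reconstruct $X$ from the projective isomorphism type of its variety of minimal rational tangents (VMRT), i.e.\ from the family of lines through a general point $x$ embedded in $\mathbb{P}(T_{x}X^{\vee})$. Write $\mathcal{C}\subset\mathbb{P}(T^{\vee}X)$ for the VMRT-structure and $C_{x}\subset\mathbb{P}(T_{x}X^{\vee})$ for its general fibre. The hypothesis forces $\mathbb{P}(T_{x}X^{\vee})\cong\mathbb{P}^{2m}$, hence $\dim X=2m+1=:n$, and identifies $C_{x}$ with the rational normal scroll $\Sigma:=\mathbb{P}(\mathscr{O}_{\mathbb{P}^{1}}(2)\oplus\mathscr{O}_{\mathbb{P}^{1}}(1)^{m-1})$ in its tautological embedding: a smooth, linearly nondegenerate variety of \emph{minimal degree} $m+1$ and dimension $m$. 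The first step is to line up the discrete invariants: deforming the lines through $x$ yields Fano index $r_{X}=\dim C_{x}+2=m+2$, so the lines have $(-K_{X})$-degree $m+2$, and these together with $\dim X=2m+1$ coincide with the corresponding data for $SG(2,\mathbb{C}^{m+3})$; in parallel one checks that $SG(2,\mathbb{C}^{m+3})$ --- which is a general hyperplane section of $G(2,\mathbb{C}^{m+3})$ in the Pl\"ucker embedding, a rational homogeneous space when $m+3$ is even and the (quasi-homogeneous) odd symplectic Grassmannian when $m+3$ is odd --- has VMRT at a general point projectively equivalent to $\Sigma$. Thus the two varieties are genuine candidates for one another.

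The natural mechanism is a Cartan--Fubini type argument after Hwang--Mok: one would prove that the VMRT-structure $\mathcal{C}$ on $X$ is \emph{locally flat}, i.e.\ locally equivalent to the VMRT-structure of $SG(2,\mathbb{C}^{m+3})$, and then propagate the germ of equivalence at a general point to a biregular isomorphism, using $\rho(X)=1$ and the irreducibility of $C_{x}$ to continue the map and to rule out proper loci of indeterminacy. The reason one cannot simply quote this --- and presumably the reason the statement is only a conjecture --- is that $\Sigma$, being a scroll of minimal degree, has \emph{non-vanishing prolongation} $\mathfrak{aut}(\widehat{\Sigma})^{(1)}\neq 0$; it occurs on the Fu--Hwang list of nondegenerate projective varieties with nonzero prolongation, and in the homogeneous case ($m+3$ even) $SG(2,\mathbb{C}^{m+3})$ is the variety associated to a \emph{short} simple root of type $C$. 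Consequently the VMRT-structure is not rigidified by its first structure function, and local flatness does not follow formally from the shape of $C_{x}$.

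The heart of the matter is therefore to establish local flatness by hand. Near a general line, $\mathcal{C}$ determines a filtered geometric structure with constant symbol in the sense of Tanaka, Morimoto and Yamaguchi, whose integrability is controlled by a tower of structure functions; the lowest one vanishes by the standard deformation-of-lines computation, which also identifies the second fundamental form of $C_{x}$ with that of $\Sigma$. The higher structure functions --- exactly those left undetermined because the prolongation is nonzero --- one would kill by exploiting the abundance of linear subvarieties of $X$: the $\mathbb{P}^{m-1}$-ruling of the scroll $\Sigma$ integrates on $X$ to a covering family of linear $\mathbb{P}^{m}$'s, and the finer projective geometry of $\Sigma$ pins down on $X$ a corank-$3$ distribution and a sub-cone that exhibit the model as a parabolic geometry; analysing chains of lines and the incidence pattern of these linear spaces should then rigidify the structure, in the spirit of the arguments of Mok and of Hong--Hwang recognizing the short-root rational homogeneous spaces from their VMRT. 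Once local flatness holds, the Cartan--Fubini extension (and its refinements by Fu--Hwang, and by Hong--Hwang toward the quasi-homogeneous setting) would deliver the isomorphism $X\cong SG(2,\mathbb{C}^{m+3})$.

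The main obstacle is precisely this control of the higher structure functions, compounded by a second difficulty when $m+3$ is odd: there $SG(2,\mathbb{C}^{m+3})$ is not a rational homogeneous space but only an equivariant compactification of a homogeneous space under the odd symplectic group, so the implication ``local flatness $\Rightarrow$ global isomorphism'' has to be re-established in a setting where the usual homogeneous recognition theorems do not literally apply. An alternative route worth trying would be to bypass the differential geometry and use the $\mathbb{P}^{m-1}$'s inside $\Sigma$ to construct on $X$ a family of linear subspaces whose incidence combinatorics forces $X$ to embed as a general hyperplane section of $G(2,\mathbb{C}^{m+3})$ in its Pl\"ucker embedding, hence to be $SG(2,\mathbb{C}^{m+3})$ --- but making this incidence geometry rigid enough to conclude looks no easier than the flatness route.
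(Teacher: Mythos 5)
The paper does not prove this statement: it is stated and used precisely as a conjecture, and the only evidence the paper cites is the theorem of Occhetta, Sol\'a Conde and Watanabe \cite{OSW}, which establishes the conclusion under the strictly stronger hypothesis that the family of lines through \emph{every} point of $X$ (not merely a general one) is projectively equivalent to the scroll. So there is no proof in the paper against which to compare your attempt, and the honest verdict is that the statement remains open.

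Your proposal is likewise not a proof, as you acknowledge. The preliminary bookkeeping is correct: $\dim X=2m+1$, index $m+2$, and $SG(2,\mathbb{C}^{m+3})$ is a hyperplane section of $G(2,\mathbb{C}^{m+3})$ whose variety of minimal rational tangents at a general point is the scroll $\Sigma=\mathbb{P}(\mathscr{O}_{\mathbb{P}^1}(2)\oplus\mathscr{O}_{\mathbb{P}^1}(1)^{m-1})$, consistent with Example \ref{Exam1}(5). You also correctly identify the standard mechanism (local flatness of the VMRT-structure followed by Cartan--Fubini extension) and the two genuine obstructions: the nonvanishing prolongation of the affine cone over $\Sigma$, which prevents flatness from following formally from the shape of the fibre, and the failure of homogeneity of $SG(2,\mathbb{C}^{m+3})$ when $m+3$ is odd. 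But the decisive step --- proving local flatness, i.e.\ controlling the higher structure functions from the general-point hypothesis alone --- is left as a program (``should then rigidify the structure'') rather than carried out, and this is exactly the step that separates the known every-point result of \cite{OSW} from the conjecture as stated. The proposal is a well-informed description of why the problem is hard, not a resolution of it; in particular it cannot be substituted for the conjecture in the proof of Theorem \ref{MainThm}(3).
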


Projective $n$-folds covered by linear spaces of large dimension $m$ have been investigated by several authors. 
E.\ Sato studied the case $m \ge \frac{n}{2}$. 
We immediately obtain the first two arguments (1) and (2) of Theorem \ref{MainThm} from the following theorem: 

\begin{thm}[{\cite{Sa}, see also \cite[Corollary 5.3]{NO}}]\label{Thm1}
Let $X$ be a smooth projective $n$-fold $X \subset \mathbb{P}^N$. 
Assume that $X$ is covered by linear spaces of dimension $m \ge \frac{n}{2}$. 
Then $X$ is one of the following: 
\begin{itemize}
\item[(i)] a linear $\mathbb{P}^k$-bundle $(k \ge m)$;
\item[(ii)] a quadric hypersurface $Q^{2m}$ $(m=\frac{n}{2})$;
\item[(iii)] the Grassmannian $G(2,\mathbb{C}^{m+2})$ $(m=\frac{n}{2})$.
\end{itemize}
\end{thm}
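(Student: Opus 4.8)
The plan is to study the covering family of $m$-planes through a general point and to extract from its infinitesimal behaviour enough rigidity to identify $X$. Fix a covering family $\mathcal{M}$ of linear $\mathbb{P}^m$'s on $X$, with universal family $p\colon\mathcal{U}\to\mathcal{M}$ and evaluation $q\colon\mathcal{U}\to X$; after replacing $\mathbb{P}^N$ by $\langle X\rangle$ we may assume $X$ is linearly non-degenerate, and since $X=\mathbb{P}^n$ falls under (i) we assume $X\subsetneq\mathbb{P}^N$. The basic remark, used throughout, is that if $L\cong\mathbb{P}^m\subset X$ and $x\in L$ then the lines of $L$ through $x$ are genuine lines of $\mathbb{P}^N$, so the second fundamental form $\mathrm{II}_{X,x}\colon\operatorname{Sym}^2 T_xX\to N_{X/\mathbb{P}^N,x}$ vanishes on $T_xL$. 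Hence at a general $x$ the base locus $\mathcal{B}_x\subset\mathbb{P}(T_xX)=\mathbb{P}^{n-1}$ of the system of quadrics $|\mathrm{II}_{X,x}|$ contains the linear space $\mathbb{P}(T_xL)=\mathbb{P}^{m-1}$ for every member $L\ni x$; in particular $\dim\mathcal{B}_x\ge m-1\ge\tfrac{n}{2}-1$, and the variety $\mathcal{C}_x$ of tangent directions at $x$ of lines on $X$ (the variety of minimal rational tangents of the unsplit dominating family of lines) lies in $\mathcal{B}_x$ and likewise contains such a $\mathbb{P}^{m-1}$.

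I would then distinguish, at a general $x$, whether the members of $\mathcal{M}$ through $x$ move in a positive-dimensional family. If not, then essentially a unique member passes through a general point, so $\mathcal{M}$ exhibits $X$ as fibred by $m$-planes, and the standard analysis of such a fibration --- using that the fibres are linearly embedded, so the total space is $\mathbb{P}(E)$ polarized by $\mathscr{O}(1)$ --- shows that $X$ is a linear $\mathbb{P}^k$-bundle with $k\ge m$ (after, if necessary, enlarging $\mathcal{M}$ to the family of maximal linear spaces on $X$), which is case (i). If instead the members through $x$ move, they fill out a locus of dimension $>m$ and the planes $\mathbb{P}(T_xL)$ correspondingly sweep out $\mathcal{C}_x$; one may moreover assume $\rho(X)=1$ --- if $\rho(X)\ge 2$, a nontrivial contraction forces the general members into its fibres and returns $X$ to the fibration case --- so that $X$ is Fano. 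This is the substantial case.

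Here the heart of the argument is to show that $\mathcal{C}_x\subset\mathbb{P}^{n-1}$ is projectively equivalent to $\mathbb{P}^{n-1}$, to a smooth quadric, or to the Segre variety $\mathbb{P}^1\times\mathbb{P}^{m-1}$, by combining the way the $(m-1)$-planes $\mathbb{P}(T_xL)$ cover $\mathcal{C}_x$ with the vanishing $\mathrm{II}_{X,x}|_{T_xL}\equiv 0$ (equivalently, the inclusion $\mathcal{C}_x\subset\mathcal{B}_x$). Granting this, the Cho--Miyaoka--Shepherd-Barron criterion (when $\mathcal{C}_x=\mathbb{P}^{n-1}$) and the recognition of irreducible Hermitian symmetric spaces from their varieties of minimal rational tangents (Mok, Hwang--Mok; for the quadric and the Grassmannian cases) show that $X$ is $\mathbb{P}^n$, a smooth quadric, or a Grassmannian $G(2,W)$; and then $m\ge\tfrac{n}{2}$, together with the fact that $Q^n$ (resp.\ $G(2,W)$) contains a linear $\mathbb{P}^m$ only for $m\le\lfloor n/2\rfloor$ (resp.\ $m\le\dim W-2=n/2$), forces $n=2m$ there, giving exactly $Q^{2m}$ and $G(2,\mathbb{C}^{m+2})$ --- cases (ii) and (iii). (More in the spirit of Sato's original proof, one can instead work in $\mathbb{P}^N$: bound the span $\langle L_1,L_2\rangle$ of a general pair of members, bound $N-n$ and $\deg X$, show $X$ is cut out by quadrics, and finish by classical projective geometry.) The main obstacle is precisely this pinning-down of $\mathcal{C}_x$: one must exclude every other subvariety of $\mathbb{P}^{n-1}$ that carries a linear $\mathbb{P}^{m-1}$ of such large dimension and sits inside a base locus of quadrics, and separate the quadric from the Segre. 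The reduction to the variety of minimal rational tangents and the fibration case are, by comparison, routine.
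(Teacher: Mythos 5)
This statement is not proved in the paper at all: it is quoted verbatim from Sato \cite{Sa} (see also \cite[Corollary 5.3]{NO}), so your sketch can only be measured against the arguments in those references. Measured that way, it is an outline of a plausible modern (VMRT-based) strategy rather than a proof, and the decisive step is missing. You yourself flag it: everything hinges on showing that the variety $\mathcal{C}_x\subset\mathbb{P}^{n-1}$ must be projectively equivalent to $\mathbb{P}^{n-1}$, a smooth quadric, or the Segre $\mathbb{P}^1\times\mathbb{P}^{m-1}$. The two constraints you extract --- that $\mathcal{C}_x$ lies in the base locus of $|\mathrm{II}_{X,x}|$ and is swept out by $(m-1)$-planes with $m-1\ge \frac{n}{2}-1$ --- do not by themselves exclude other candidates; ruling them out \emph{is} the content of Sato's theorem in infinitesimal form, and no mechanism for doing so is indicated. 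Until that classification of $\mathcal{C}_x$ is actually carried out, cases (ii) and (iii) are not established.

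There are also gaps in the parts you call routine. From ``essentially a unique member of $\mathcal{M}$ passes through a general point'' it does not formally follow that $X$ is a linear $\mathbb{P}^k$-bundle: one must show the maximal linear spaces are pairwise disjoint and are the fibres of a morphism defined on all of $X$ (not just on the open locus of general points), which is where Sato and Novelli--Occhetta invest real work via normal-bundle and deformation arguments. Likewise, the one-line reduction to $\rho(X)=1$ (``a nontrivial contraction forces the general members into its fibres'') is asserted, not proved: for an arbitrary extremal contraction the restriction to a member $L\cong\mathbb{P}^m$ could a priori be finite, and one has to argue that some contraction does contract the planes or else derive case (i) differently. Finally, invoking the Cho--Miyaoka--Shepherd-Barron criterion and the Mok/Hwang--Mok recognition of Hermitian symmetric spaces requires verifying their hypotheses (an unsplit dominating family whose VMRT at a general point is projectively equivalent to that of the model), which again presupposes the unproved determination of $\mathcal{C}_x$. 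As it stands the proposal is a roadmap with the central milestone left blank.
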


C.\ Novelli and G.\ Occhetta classified the case $m=\frac{n-1}{2}$ under the assumption that the normal bundles of the covering linear spaces are numerically effective \cite[Theorem 1.1]{NO}. 
However, the complete classification of the case $m=\frac{n-1}{2}$ is not known. 
So, the argument (3) of Theorem \ref{MainThm} is the main part of this paper. 

In order to classify the case $S_X = \frac{n-1}{2}$, we need Conjecture \ref{Conj}, i.e., a characterization of sympletic Grassmannians in terms of families of lines. 
G.\ Occhetta, L.\ E. Sol\'a Conde, and K.\ Watanabe's recent result \cite{OSW} says that Conjecture \ref{Conj} is ture if the assumption holds for every point $x \in X$. 

This paper is organized as follows. 
In section 2, we mention several facts concerning families of lines. 
In section 3, we introduce higher order families of lines and the invariant $S_X$. 
In section 4, we prove (3) of Theorem \ref{MainThm}. 

\begin{ack}
The author would like to express his gratitude to his supervisor Professor Hajime Kaji for valuable discussions. 
The author would also like to thank Professor Kiwamu Watanabe for beneficial suggestions and comments. 
The author is also grateful to Professor Yasunari Nagai and Professor Daizo Ishikawa for useful comments. 
\end{ack}

\section{Preliminaries}

We denote by $\rho_X$ the Picard number of a manifold $X$. 
We refer to \cite[I and II]{Ko} for basic theory of families of rational curves. 
Throughout this paper, we consider a smooth projective Fano $n$-fold $X$ embedded in a projective space $\mathbb{P}^N$. 
Let $x$ be a fixed general point of $X$. 
We denote by $F_1(X,x)$ the Hilbert scheme of lines on $X$ passing through $x$.
From now on, let $H$ be an irreducible component of $F_1(X,x)$, which is called a \textit{family of lines through $x$}. 

\begin{lem}[{\cite[Theorem 1.3 and Proposition 1.5]{Hw1}}]\label{Lem1}
We can regard $H$ as a smooth projective subvariety of the projectivised tangent space $\mathbb{P}(T_{x} X^{\vee})$ which has dimension $(-K_X \cdot H)-2$, where $\mathbb{P}(T_{x} X^{\vee}):=(T_{x} X \setminus \{0\}) \slash \mathbb{C}^{\times} \cong \mathbb{P}^{n-1}$, and $(-K_X \cdot H)$ means the anti-canonical degree of a line parametrized by $H$. 
\end{lem}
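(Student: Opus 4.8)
The plan is to realize $H$ inside $\mathbb{P}(T_x X^\vee)$ by means of the \emph{tangent map} $\tau_x$, which assigns to a line $\ell \subset X$ through $x$ the point $[T_x\ell] \in \mathbb{P}(T_x X^\vee)$ determined by its tangent direction at $x$; this is a morphism, induced by the differential along $x$ of the evaluation map on the universal line over $H$. First I would determine the local structure of $F_1(X,x)$ along $H$. Since $x$ is general, a bend-and-break argument (Mori, Kebekus) shows that every line $\ell \subset X$ through $x$ is free and in fact \emph{standard}: $T_X|_\ell \cong \mathscr{O}_{\mathbb{P}^1}(2) \oplus \mathscr{O}_{\mathbb{P}^1}(1)^{p} \oplus \mathscr{O}_{\mathbb{P}^1}^{q}$, so that $N_{\ell/X} \cong \mathscr{O}_{\mathbb{P}^1}(1)^{p} \oplus \mathscr{O}_{\mathbb{P}^1}^{q}$ with $p+q = n-1$ and $p = \deg N_{\ell/X} = (-K_X\cdot\ell)-2$. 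The deformations of $\ell$ in $X$ fixing $x$ are governed by $H^\bullet(\ell, N_{\ell/X}(-x))$, where $N_{\ell/X}(-x) := N_{\ell/X}\otimes\mathscr{O}_\ell(-x) \cong \mathscr{O}_{\mathbb{P}^1}^{p} \oplus \mathscr{O}_{\mathbb{P}^1}(-1)^{q}$; since its $H^1$ vanishes, $F_1(X,x)$ is smooth at $[\ell]$ of dimension $h^0 = p$. As this holds at every point of $H$, the component $H$ is a smooth projective variety (projective because it is a component of a Hilbert scheme) of dimension $p = (-K_X\cdot H)-2$, and $\mathbb{P}(T_x X^\vee) \cong \mathbb{P}^{n-1}$ because $\dim T_x X = n$.

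It then remains to prove that $\tau_x$ is a closed embedding, which I would do in three steps. \textbf{Finiteness:} if a curve $C \subset H$ were contracted by $\tau_x$, then the lines parametrized by $C$ would all be tangent to one another at $x$; bend-and-break applied to the surface they sweep out produces, through $x$ and a general second point of $X$, a reducible or non-reduced conic, contradicting that $H$ parametrizes irreducible curves of minimal degree. \textbf{Unramifiedness:} at a standard $[\ell]$ the differential $d\tau_x \colon T_{[\ell]}H = H^0(\ell, N_{\ell/X}(-x)) \to N_{\ell/X}(-x)|_x$ is the fibre-evaluation map, and under the splitting $\mathscr{O}_{\mathbb{P}^1}^{p} \oplus \mathscr{O}_{\mathbb{P}^1}(-1)^{q}$ it restricts to an isomorphism on the trivial summand while the $\mathscr{O}_{\mathbb{P}^1}(-1)$ summands contribute no global sections, so $d\tau_x$ is injective and $\tau_x$ is unramified. \textbf{Injectivity:} for general $x$, two distinct lines through $x$ have distinct tangent directions there, once more by the absence of reducible conics through $x$ and a general point together with the smoothness of $H$. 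A finite, unramified, injective morphism is a closed embedding, so $\tau_x$ identifies $H$ with a smooth projective subvariety of $\mathbb{P}(T_x X^\vee)$ of the stated dimension.

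The deformation-theoretic dimension count is routine; the genuine obstacle is the control of the tangent map $\tau_x$, and in particular its injectivity, which rests on nontrivial bend-and-break input — precisely what the quoted results of Hwang (building on Kebekus) supply.
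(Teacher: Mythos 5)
The paper does not actually prove this lemma: it is imported verbatim from \cite[Theorem 1.3 and Proposition 1.5]{Hw1}, so the only comparison available is with the cited source, and your reconstruction is a correct rendering of that general theory of minimal rational curves (tangent map, Kebekus finiteness, evaluation of $N_{\ell/X}(-x)$). It is worth pointing out, though, that for \emph{lines} almost all of the hard input you invoke is unnecessary. A line in $\mathbb{P}^N$ is determined by a point and a tangent direction, so $F_1(\mathbb{P}^N,x)$ is canonically the projectivised tangent space of $\mathbb{P}^N$ at $x$, and $F_1(X,x)$ is tautologically a closed subscheme of it lying inside $\mathbb{P}(T_{x} X^{\vee})$; hence the tangent map is a closed immersion for free, and your finiteness and injectivity steps are vacuous (distinct lines through $x$ never share a tangent direction at $x$ --- the reducible-conic/bend-and-break argument is genuinely needed only for minimal curves of degree at least $2$). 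The whole content of the lemma is then smoothness and the dimension count, which follow from freeness of every line through a general point \cite[II.3.11]{Ko}: writing $N_{\ell/X}\cong\bigoplus_i\mathscr{O}_{\mathbb{P}^1}(a_i)$ with $a_i\ge 0$ gives $H^1(\ell,N_{\ell/X}(-x))=0$ and $h^0(\ell,N_{\ell/X}(-x))=\sum_i a_i=\deg N_{\ell/X}=(-K_X\cdot\ell)-2$, with no appeal to standardness. The one assertion in your write-up that is not justified as stated is that \emph{every} (not merely the general) line through a general $x$ is standard; bend-and-break does not give this for an arbitrary minimal family, but for lines it is true for the elementary reason that $N_{\ell/X}$ is a subbundle of $N_{\ell/\mathbb{P}^N}\cong\mathscr{O}_{\mathbb{P}^1}(1)^{\oplus(N-1)}$, which forces $a_i\le 1$. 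With that point repaired (or bypassed entirely via the tautological embedding), your proof is complete and consistent with the result the paper cites.
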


\begin{exam}[{see \cite[1.4]{Hw1}}]\label{Exam1}
\ 
\begin{enumerate}
\item If $X$ is a linear space $\mathbb{P}^n$, then $H$ is equal to $\mathbb{P}(T_{x} X^{\vee})$. 
\item If $X$ is a quadric hypersurface $Q^n$ in $\mathbb{P}^{n+1}$ ($n \ge 3$), then $H$ is a quadric hypersurface $Q^{n-2}$ in $\mathbb{P}(T_{x} X^{\vee})$. 
\item If $X$ is $\mathbb{P}^k\times \mathbb{P}^m$ (Segre embedding), then $H$ is a linear space $\mathbb{P}^{k-1}$ or $\mathbb{P}^{m-1} $ in $\mathbb{P}(T_{x} X^{\vee})$. 
\item If $X$ is the Grassmannian $G(k,\mathbb{C}^m)$ (Pl{\"u}cker embedding), then $H$ is $\mathbb{P}^{k-1}\times \mathbb{P}^{m-k-1}$ embedded in $\mathbb{P}(T_{x} X^{\vee})$ by the Segre embedding. 
\item If $X$ is the symplectic Grassmannian $SG(k,\mathbb{C}^m)$, then $H$ is $\mathbb{P}(\mathscr{O}_{\mathbb{P}^{k-1}}(2)\oplus \mathscr{O}_{\mathbb{P}^{k-1}}(1)^{m-2k})$ embedded in $\mathbb{P}(T_{x} X^{\vee})$ by the tautological line bundle. 
\item If $X$ is a complete intersection of degrees $(d_1,\ldots ,d_m)$ in $\mathbb{P}^N$ ($N \ge \sum d_i+2$), then $H$ is a complete intersection of degrees $(2,3,\ldots, d_1,\ldots , 2,3,\ldots, d_m)$ in $\mathbb{P}(T_{x} X^{\vee})$. 
\end{enumerate}
\end{exam}

\begin{lem}\label{Lem3}
If $H$ contains a linear space $L$ of dimension $m$, then $X$ contains a linear space of dimension $m+1$ through $x$. 
\end{lem}

\begin{proof}
Let $U$ be the universal family of $H$, and let $\pi : U \rightarrow H$ and $e: U \rightarrow X$ be the associated morphisms. 
By the proof of \cite[Lemma 2.3]{AC}, we have that the restriction of $e$ to $\pi^{-1}(L)$ is the blow-up of $\mathbb{P}^{m+1}$ and $e(\pi^{-1}(L))$ is a linear space of dimension $m+1$ through $x$. 
\end{proof}

\begin{lem}\label{Lem2}
We have the following: 
\begin{enumerate}
\item If ${\rm dim}\,H=n-1$ $($i.e.,\ $H=\mathbb{P}(T_{x} X^{\vee}))$, then $X$ is a linear space $\mathbb{P}^n$. 
\item If $\rho_X=1$ and ${\rm dim}\,H=n-2$, then $X$ is a quadric hypersurface ${Q}^n$ $(n \ge 3)$. 
\item If $\rho_X=1$ and ${\rm dim}\,H=n-3$, then $X$ is one of the following: 
\begin{itemize}
\item[(i)] a cubic hypersurface;
\item[(ii)] a complete intersection of two quadric hypersurfaces; 
\item[(iii)] the Grassmannian $G(2,\mathbb{C}^5)$ or a linear section of it. 
\end{itemize}
\end{enumerate}
\end{lem}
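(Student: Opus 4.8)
The plan is to translate each hypothesis on $\dim H$ into information about the dimension of the linear spaces covering $X$, using Lemma \ref{Lem3}, and then to invoke the classification results already available in the literature (and recalled in this paper as Theorem \ref{Thm1}).

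For part (1): if $\dim H = n-1$, then $H = \mathbb{P}(T_xX^\vee) \cong \mathbb{P}^{n-1}$ itself is a linear space of dimension $n-1$ sitting inside $H$, so by Lemma \ref{Lem3} the variety $X$ contains a linear $\mathbb{P}^n$ through $x$; since $X$ is an irreducible $n$-fold this forces $X = \mathbb{P}^n$.

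For part (2): if $\dim H = n-2$, then $H$ is a hypersurface in $\mathbb{P}(T_xX^\vee) \cong \mathbb{P}^{n-1}$, and I would use the fact that a positive-dimensional (here $n \ge 3$ so $\dim H \ge 1$) smooth hypersurface contains a line (or more generally, that $H$, being a smooth subvariety of projective space of codimension $1$, contains a linear $\mathbb{P}^{n-3}$ — one can cut by general hyperplanes down to a curve and argue, or directly note any smooth hypersurface of dimension $\ge 1$ contains linear subspaces of dimension $\dim H - 1$ when the degree is $\le 2$). More efficiently: $\dim H = n-2$ means $X$ is covered by linear spaces of dimension $m = n-1 > \frac n2$ through $x$? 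No — I would instead argue that $X$ contains a linear $\mathbb{P}^{n-1}$ only if $X = \mathbb{P}^n$, so $H$ is a proper hypersurface; then by the adjunction/degree computation in Lemma \ref{Lem1} together with $\rho_X = 1$, the hypersurface $H \subset \mathbb{P}^{n-1}$ must be a quadric $Q^{n-2}$ (Example \ref{Exam1}(2) shows quadrics realize this), and the classification of Fano manifolds whose VMRT is a smooth quadric hypersurface — due to Hwang–Mok / Cho–Miyaoka–Shepherd-Barron type results — yields $X = Q^n$. Concretely, Theorem \ref{Thm1} applies: $H$ contains a linear $\mathbb{P}^{n-3}$ (a smooth quadric of dimension $n-2 \ge 1$ always does), so $X$ is covered by linear $\mathbb{P}^{n-2}$'s, i.e. $m = n-2 = \frac{n}{2}$ forces $n=4$; for general $n$ I instead need $m \ge \frac n2$, which holds iff $n \le 4$, so this route is insufficient and I must genuinely use the VMRT characterization of the quadric.

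For part (3): if $\dim H = n-3$, then $H$ is a smooth subvariety of $\mathbb{P}(T_xX^\vee) \cong \mathbb{P}^{n-1}$ of codimension $2$ and dimension $n-3$; by Lemma \ref{Lem3} it suffices to understand which such $H$ occur, and again the anti-canonical degree computation of Lemma \ref{Lem1} pins down $(-K_X \cdot \ell) = n-1$ for a line $\ell$, so $X$ has index $n-1$ and coindex $2$, i.e. $X$ is a \emph{del Pezzo manifold} (in the Picard number one case). The classification of del Pezzo manifolds — Fujita's classification — gives exactly the list in (i)–(iii): cubic hypersurfaces, $(2,2)$-complete intersections, and linear sections of $G(2,\mathbb{C}^5)$ (together with $\mathbb{P}^3$ and $Q^n$, which are excluded since they have $\dim H = n-1, n-2$ respectively, and the low-dimensional del Pezzo threefolds $V_4, V_5$ fit as sections). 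So the argument is: (a) compute $(-K_X \cdot H) = \dim H + 2 = n-1$ from Lemma \ref{Lem1}; (b) deduce the index of $X$ is $n-1$; (c) apply Fujita's classification of del Pezzo manifolds of Picard number one; (d) check that cases with smaller-dimensional VMRT ($\mathbb{P}^n$, $Q^n$) are the ones with $\dim H > n-3$ and hence excluded.

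The main obstacle is part (2) — and, to a lesser extent, part (3): the honest input is not elementary but rather the deep VMRT-based characterizations (for (2), that a Fano manifold of Picard number one whose family of lines through a general point is a smooth quadric hypersurface must be a quadric; for (3), Fujita's classification of del Pezzo manifolds, or equivalently the classification of Fano manifolds of coindex two). I expect the author's proof to simply cite these: Theorem \ref{Thm1} handles (1) and (2) as stated in the introduction, and (3) reduces to the coindex-two / del Pezzo classification via the index computation from Lemma \ref{Lem1}.
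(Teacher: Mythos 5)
Your part (1) matches the paper's argument (it is exactly Lemma \ref{Lem3} applied to $H=\mathbb{P}(T_xX^\vee)$), and your part (3) is precisely the paper's route: since $\rho_X=1$ and a line has degree $1$ against $\mathscr{O}_{\mathbb{P}^N}(1)|_X$, Lemma \ref{Lem1} gives $-K_X\sim\mathscr{O}_{\mathbb{P}^N}(\dim H+2)|_X=\mathscr{O}_{\mathbb{P}^N}(n-1)|_X$, so $X$ is a del Pezzo manifold and Fujita's classification \cite{Fu1,Fu2} yields the list (i)--(iii).

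The problem is part (2), where you abandon this argument and reach for a VMRT characterization of the quadric. Two issues. First, your intermediate claim that ``the hypersurface $H\subset\mathbb{P}^{n-1}$ must be a quadric $Q^{n-2}$'' does not follow from the degree computation of Lemma \ref{Lem1}: that lemma controls the anticanonical degree of the lines on $X$ (equivalently $\dim H$), not the degree of $H$ as a subvariety of $\mathbb{P}(T_xX^\vee)$, so this step is unjustified as written and the VMRT route has a genuine gap. Second, the heavy machinery is unnecessary: the very same index computation you use for (3) works one step earlier. With $\dim H=n-2$ you get $-K_X\sim\mathscr{O}_{\mathbb{P}^N}(n)|_X$, i.e.\ $X$ has index $n$ with respect to an ample line bundle, and the Kobayashi--Ochiai theorem \cite{KO} immediately gives $X\cong Q^n$. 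This is what the paper does. (Your observation that Theorem \ref{Thm1} cannot handle (2) for large $n$ is correct, and your instinct in the final paragraph that everything reduces to an index/coindex computation is right --- you just failed to carry it through for coindex $1$.)
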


\begin{proof}
(1) follows from Lemma \ref{Lem3}. 
If $\rho_X=1$, then $-K_X$ is linearly equivalent to $\mathscr{O}_{\mathbb{P}^N}({\rm dim}\,H+2)|_X$ by Lemma \ref{Lem1}. 
So, (2) follows from \cite{KO}, and (3) follows from \cite{Fu1} and \cite{Fu2}. 
\end{proof}

\begin{lem}\label{Lem4}
If $\rho_X=1$ and ${\rm dim}\,H \ge \frac{n-1}{2}$, then $H \subset \mathbb{P}(T_{x} X^{\vee})$ is non-degenerate. 
\end{lem}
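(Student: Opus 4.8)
The plan is to argue by contradiction: suppose $H \subset \mathbb{P}(T_x X^\vee) \cong \mathbb{P}^{n-1}$ is degenerate, i.e., $H$ is contained in a hyperplane $\mathbb{P}^{n-2} \subset \mathbb{P}^{n-1}$. Let $d := \dim H$. First I would observe that by Lemma~\ref{Lem1} the anti-canonical degree of a line in $H$ is $d+2$, and since $\rho_X = 1$ this means $-K_X$ is linearly equivalent to $\mathscr{O}_{\mathbb{P}^N}(d+2)|_X$. The degeneracy of $H$ should force an abnormally low-dimensional locus swept out by lines through $x$, and I would like to convert this into a statement about the second fundamental form or about the variety of lines globally.

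The cleanest route I can see uses the universal family: let $\pi : U \to H$, $e : U \to X$ be the evaluation morphisms as in the proof of Lemma~\ref{Lem3}. The locus $\mathrm{Locus}(H) := e(U) \subset X$ is the union of lines through $x$ parametrized by $H$, and it has dimension $d+1$. A classical fact (Hwang, or \cite[II]{Ko}) is that the tangent directions at $x$ of the lines through $x$ in $H$ span the affine cone over $H$; hence if $H$ lies in a hyperplane $\mathbb{P}^{n-2}$, then the tangent space at $x$ of $\mathrm{Locus}(H)$ — more precisely its embedded tangent space as a subvariety of $\mathbb{P}^N$ at the general point $x$ — is strictly smaller than $T_x X$. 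Combined with $\dim \mathrm{Locus}(H) = d+1 \ge \frac{n-1}{2}+1 = \frac{n+1}{2}$, and the fact that $X$ has Picard number $1$ (so $X$ is not itself a cone and $\mathrm{Locus}(H)$ cannot be all of $X$ unless $d = n-1$), I would try to derive a contradiction by a dimension count: the varieties $\mathrm{Locus}(H_x)$ as $x$ varies sweep out $X$, but if each has degenerate tangent direction, the total tangent variation is too small to fill up $X$.

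Alternatively — and this may be the more robust approach given what is cited in the paper — I would invoke the classification-type input already available. If $d = \dim H \ge \frac{n-1}{2}$, then $H$ is a smooth Fano variety of dimension $\ge \frac{n-1}{2}$ sitting in $\mathbb{P}^{n-1}$, and it is covered by lines (being Fano of large index relative to its dimension, or directly because $X$ is covered by linear spaces of dimension $\ge d+1 \ge \frac{n+1}{2}$ by Lemma~\ref{Lem3} applied repeatedly, which by Theorem~\ref{Thm1} constrains $X$ severely). In each of the cases of Theorem~\ref{Thm1} — a linear $\mathbb{P}^k$-bundle is excluded by $\rho_X = 1$ unless it is $\mathbb{P}^n$ itself, a quadric $Q^{2m}$, or $G(2,\mathbb{C}^{m+2})$ — one checks from Example~\ref{Exam1} that the corresponding $H$ is non-degenerate ($\mathbb{P}^{n-1}$, a quadric $Q^{n-2}$, or $\mathbb{P}^1 \times \mathbb{P}^{m-1}$ Segre-embedded, all non-degenerate). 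But this only directly handles $d \ge \frac{n}{2}$; for the boundary case $d = \frac{n-1}{2}$ one needs the finer argument.

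The main obstacle is precisely the boundary case $\dim H = \frac{n-1}{2}$, where Theorem~\ref{Thm1} does not apply. Here I expect the real work to be a careful tangential/secant argument: showing that a $d$-dimensional family of lines through a general point $x$, all of whose tangent directions lie in a fixed hyperplane of $\mathbb{P}(T_x X)$, cannot exist when $2d+1 \ge n$ and $\rho_X = 1$. The key technical step will be to relate the degeneracy of $H$ to the positivity of the normal bundle of a line $\ell$ through $x$: a general line $\ell$ has $N_{\ell/X} = \mathscr{O}(1)^a \oplus \mathscr{O}^b$ with $a = \dim H$ determined by Lemma~\ref{Lem1}, and the hyperplane containing $H$ should translate into a global section constraint or a splitting obstruction that contradicts $-K_X \cdot \ell = d+2$ together with ampleness. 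I would look to adapt the bend-and-break / tangent-map arguments in \cite[II]{Ko} and \cite{Hw1}, the same toolkit the author uses for Lemma~\ref{Lem3}, to close this gap.
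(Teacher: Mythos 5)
There is a genuine gap: your proposal does not actually close the argument, and you say as much yourself for the decisive boundary case $\dim H = \frac{n-1}{2}$ (``I expect the real work to be\dots'', ``I would look to adapt\dots''). The missing idea is the one the paper's proof is built on. By \cite[Theorem 1.3 and Proposition 1.5]{Hw1}, the \emph{whole} scheme $F_1(X,x)$, viewed inside $\mathbb{P}(T_xX^{\vee})$, is a smooth projective subvariety $V$ all of whose irreducible components have the same dimension as $H$ (because $\rho_X=1$ forces all lines to have the same anti-canonical degree), and \cite[Theorem 2.5]{Hw1} asserts that $V$ is non-degenerate. The hypothesis $\dim H \ge \frac{n-1}{2}$ is then used only to show $H=V$: two distinct components of $V$ would be disjoint (smoothness) yet each of dimension $\ge \frac{1}{2}\dim\mathbb{P}(T_xX^{\vee})$, so they would have to meet in $\mathbb{P}^{n-1}$ --- a contradiction; hence $V$ is irreducible and equals $H$, which is therefore non-degenerate. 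None of your three sketches identifies this citation or a substitute for it, so the lemma is not proved.

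Two further concrete problems with the fallback routes you offer. First, the appeal to Theorem~\ref{Thm1} rests on the claim that $\dim H = d$ makes $X$ covered by linear spaces of dimension $d+1$ ``by Lemma~\ref{Lem3} applied repeatedly''; this is a misreading of Lemma~\ref{Lem3}, which requires $H$ to \emph{contain} a $d$-dimensional linear space, not merely to have dimension $d$ (for $X=Q^n$ one has $\dim H = n-2$, yet $Q^n$ is not covered by $(n-1)$-planes). Second, the tangent-space observation in your first route is vacuous as stated: $\mathrm{Locus}(H)$ has dimension $d+1<n$ whether or not $H$ is degenerate, so noting that its embedded tangent space at $x$ is a proper subspace of $T_xX$ produces no contradiction; to extract anything from degeneracy one must work with the distribution spanned by all tangent directions of lines at general points, which is exactly what Hwang's Theorem 2.5 packages for you.
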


\begin{proof}
By \cite[Theorem 1.3 and Proposition 1.5]{Hw1}, $H$ is contained in a smooth projective subvariety $V$ of $\mathbb{P}(T_{x} X^{\vee})$ whose every irreducible component has same dimension as $H$, and \cite[Theorem 2.5]{Hw1} implies that $V \subset \mathbb{P}(T_{x} X^{\vee})$ is non-degenerate. 
Since ${\rm dim}\,H \ge \frac{1}{2}{\rm dim}\,\mathbb{P}(T_{x} X^{\vee})$, we have $H=V$, thus we obtain the conclusion. 
\end{proof}

\begin{lem}\label{Lem5}
If $\rho_X=1$ and $H \subsetneq \mathbb{P}(T_{x} X^{\vee})$ is a linear space of positive dimension, then ${\rm dim}\,H \le \frac{n-4}{2}$. 
\end{lem}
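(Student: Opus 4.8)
The plan is to leverage the known structure of Fano manifolds with a large family of lines together with the iterative nature of the invariant. Suppose $\rho_X = 1$ and $H \subsetneq \mathbb{P}(T_x X^\vee)$ is a linear space $\mathbb{P}^m$ with $m \ge 1$. First I would apply Lemma \ref{Lem3}: since $H$ contains (indeed equals) a linear space of dimension $m$, the variety $X$ contains a linear space of dimension $m+1$ through the general point $x$. Because $x$ is general, $X$ is covered by linear spaces of dimension $m+1$. Now I invoke Sato's Theorem \ref{Thm1}: if we had $m+1 \ge \frac{n}{2}$, then $X$ would be a linear $\mathbb{P}^k$-bundle, a quadric $Q^{2(m+1)}$, or the Grassmannian $G(2,\mathbb{C}^{m+3})$. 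Since $\rho_X = 1$, the bundle case is excluded (or forces $X = \mathbb{P}^n$, in which case $H = \mathbb{P}(T_x X^\vee)$, contradicting $H \subsetneq \mathbb{P}(T_x X^\vee)$); and for both the quadric and the Grassmannian, Example \ref{Exam1}(2),(4) shows that the family of lines through a general point is a quadric $Q^{n-2}$ or a Segre-embedded $\mathbb{P}^1 \times \mathbb{P}^{m}$ respectively — in neither case is $H$ a linear space of positive dimension. Hence $m+1 < \frac{n}{2}$ is forced, which already gives $m \le \frac{n}{2} - 2 = \frac{n-4}{2}$, i.e. $\dim H \le \frac{n-4}{2}$.

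The one delicate point is making sure the strict inequality $m+1 < \frac{n}{2}$, rather than just $m + 1 \le \frac{n}{2}$, is justified — equivalently, ruling out $m+1 = \frac{n}{2}$ cleanly. When $m+1 = \frac{n}{2}$ Theorem \ref{Thm1} still applies and delivers exactly $X \cong Q^n$ or $X \cong G(2,\mathbb{C}^{m+3})$ (the bundle case (i) requires $k \ge m+1$, so a $\mathbb{P}^k$-bundle with $k \ge m+1$ and $\rho_X = 1$ means $k = n$ and $X = \mathbb{P}^n$, again contradicting $H \subsetneq \mathbb{P}(T_x X^\vee)$). For the quadric and Grassmannian cases I would cite Example \ref{Exam1}(2),(4) to see that $H$ cannot be a positive-dimensional linear space: a quadric $Q^{n-2}$ with $n-2 \ge 1$ is never linear, and $\mathbb{P}^1 \times \mathbb{P}^{m}$ Segre-embedded is never a linear space once $m \ge 1$. (If $m = 0$, then $H$ is a point, which is not of positive dimension, so this case is outside our hypotheses.) This closes the gap and yields $m+1 \le \frac{n}{2} - 1$ whenever $H$ has positive dimension.

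The main obstacle, such as it is, is purely bookkeeping: being careful that "covered by linear spaces of dimension $m+1$" is the correct hypothesis for Theorem \ref{Thm1} (it is, since generality of $x$ propagates the $\mathbb{P}^{m+1}$'s to a covering family), and that the Picard number $1$ assumption is used to eliminate the projective-bundle alternative. Everything else is a direct citation chain: Lemma \ref{Lem3} $\to$ Theorem \ref{Thm1} $\to$ Example \ref{Exam1}. I do not anticipate needing any genuinely new argument, only the observation that the conclusion of Sato's theorem is incompatible with a positive-dimensional \emph{linear} family of lines whenever the covering dimension would reach $\frac{n}{2}$.
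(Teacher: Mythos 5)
There is a genuine gap, and it is quantitative: your argument does not reach the bound $\frac{n-4}{2}$ when $n$ is odd. Writing $m=\dim H$, your chain Lemma \ref{Lem3} $\to$ Theorem \ref{Thm1} $\to$ Example \ref{Exam1} correctly shows that $X$ cannot be covered by linear spaces of dimension $m+1\ge \frac{n}{2}$, hence $m+1<\frac{n}{2}$. But the step ``$m+1<\frac{n}{2}$ already gives $m\le \frac{n}{2}-2=\frac{n-4}{2}$'' is only valid for even $n$. For odd $n$ the strict inequality between integers and the half-integer $\frac{n}{2}$ yields only $m+1\le\frac{n-1}{2}$, i.e.\ $m\le\frac{n-3}{2}$, which is strictly weaker than the asserted $m\le\frac{n-4}{2}$. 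The case you cannot exclude is a covering by linear spaces of dimension exactly $\frac{n-1}{2}$, and this lies just outside the range $m+1\ge\frac{n}{2}$ where Sato's theorem applies; no amount of case analysis of the conclusion of Theorem \ref{Thm1} can close this, because the theorem's hypothesis is simply not satisfied there. The loss is not cosmetic: in Case 2 of the proof of the main theorem the lemma is used precisely through the inequality $\frac{n_{i-1}-4}{2}\ge n_i$ to produce the contradiction $2m+1\ge 2m+2$; with the weaker bound $\frac{n-3}{2}$ one only gets $2m+1\ge 2m+1$ and the argument collapses.

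The paper closes exactly this extra half-step by a different mechanism. From the proof of \cite[Proposition 2.1]{Hw2} it extracts that the covering $(m+1)$-planes produced by Lemma \ref{Lem3} from a \emph{linear} family $H$ have trivial normal bundle in $X$; then \cite[Proposition 6.4]{NO} forbids trivial normal bundles for covering linear spaces of dimension $\ge\frac{n-1}{2}$. This excludes the covering dimension $\frac{n-1}{2}$ that Sato's theorem cannot see, and gives $m+1<\frac{n-1}{2}$, hence $m\le\frac{n-4}{2}$ in all parities. To repair your proof you would need to add this normal-bundle input (or some substitute handling covering dimension exactly $\frac{n-1}{2}$); the citation chain you propose is sound as far as it goes, but it terminates one step short of the stated bound.
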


\begin{proof}
Set $d:={\rm dim}\,H (\ge 1)$. 
Lemma \ref{Lem3} implies that $X$ is covered by linear spaces of dimension $d+1(\ge 2)$. 
By the proof of \cite[Proposition 2.1]{Hw2}, we see that the normal bundles of the linear spaces are trivial. 
On the other hand, if $d \ge \frac{n-3}{2}$ (i.e., $d+1 \ge \frac{n-1}{2}$), then the normal bundles cannot be trivial by \cite[Proposition 6.4]{NO}. 
Thus we obtain $d \le \frac{n-4}{2}$. 
\end{proof}

\section{Higher order families of lines}

In \cite{Su}, we introduced \textit{higher order minimal families of rational curves} and invariants $\overline{N}_X$ and $\underline{N}_X$ for Fano manifolds $X$. 
We introduce line versions of them as follows: 

\begin{dfn}
We write $X \models H$ if $X$ is an embedded Fano manifold covered by lines and $H$ is a family of lines through a fixed general point. 
If $X \models H_1$ and $H_1$ is also a Fano manifold covered by lines in the projectivised tangent space, then we consider  $H_1 \models H_2$ a family of lines on $H_1$ through a fixed general point. 
In the same way, if there is a chain 
$$X \models H_1 \models H_2 \models \cdots \models H_m$$
of length $m$, we call $H_m$ an \textit{$m$-th order family of lines}. 
In addition, we denote by $S_X$ the greatest number $m$ such that there is an $m$-th order family of lines. 
\end{dfn}

\begin{exam}\label{Exam2}
By Example \ref{Exam1}, we have the following: 
\begin{enumerate}
\item If $X$ is a linear space $\mathbb{P}^n$, then 
$$X \models \mathbb{P}^{n-1} \models \mathbb{P}^{n-2} \models \cdots \models \mathbb{P}^1 \models \text{pt},$$
so $S_X=n$. 
\item If $X$ is a quadric hypersurface $Q^n$, then 
$$X \models Q^{n-2} \models Q^{n-4} \models \cdots \models \begin{cases} Q^2 \models \text{pt}& \text{ if }n\text{ is even}\\Q^3 \models Q^1& \text{ if }n\text{ is odd}\end{cases},$$
so $S_X=\begin{cases} \frac{n}{2}& \text{ if }n\text{ is even}\\\frac{n-1}{2}& \text{ if }n\text{ is odd}\end{cases}$. 
\item If $X$ is the Grassmannian $G(2,\mathbb{C}^{m+2})$ (${\rm dim}\,X=2m$), then 
$$X \models \mathbb{P}^1\times \mathbb{P}^{m-1} \models \begin{cases}\text{pt}\\ \mathbb{P}^{m-2} \models \mathbb{P}^{m-3} \models \cdots \models \text{pt}\end{cases},$$
so $S_X=m$. 
\item If $X$ is the symplectic Grassmannian $SG(2,\mathbb{C}^{m+3})$ (${\rm dim}\,X=2m+1$), then 
$$X \models \mathbb{P}(\mathscr{O}_{\mathbb{P}^1}(2)\oplus \mathscr{O}_{\mathbb{P}^1}(1)^{m-1}) (\cong \text{Bl}_{\mathbb{P}^{m-2}}\mathbb{P}^m) \models \mathbb{P}^{m-2} \models \mathbb{P}^{m-3} \models \cdots \models \text{pt},$$
so $S_X=m$. 
\end{enumerate}
\end{exam}

\begin{prop}\label{Prop1}
Any Fano manifold $X\subset \mathbb{P}^N$ is covered by linear spaces of dimension at least $S_X$. 
\end{prop}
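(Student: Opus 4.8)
The plan is to argue by induction on $S_X$, using Lemma~\ref{Lem3} as the engine that transfers a linear space inside a family of lines to a linear space of one higher dimension inside the ambient variety. First I would dispose of the base case $S_X = 0$: here the statement asserts only that $X$ is covered by linear spaces of dimension at least $0$, i.e.\ by points, which is vacuous. For $S_X = 1$, any family of lines $H_1$ with $X \models H_1$ gives a covering of $X$ by lines, i.e.\ by linear spaces of dimension $1$, so the claim holds directly.

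For the inductive step, suppose $S_X = m \ge 1$ and that the proposition is known for all embedded Fano manifolds with smaller value of the invariant. By definition of $S_X$ there is a chain $X \models H_1 \models H_2 \models \cdots \models H_m$, and the tail $H_1 \models H_2 \models \cdots \models H_m$ witnesses $S_{H_1} \ge m-1$. By the inductive hypothesis applied to $H_1 \subset \mathbb{P}(T_x X^\vee)$, the family $H_1$ is covered by linear spaces of dimension at least $m-1$. In particular, a general point of $H_1$ lies on a linear space $L \subset H_1$ with $\dim L \ge m-1$. Now apply Lemma~\ref{Lem3}: since $H_1$ contains the linear space $L$ of dimension $\ge m-1$, the variety $X$ contains a linear space of dimension $\ge m$ through the (general) point $x$ at which $H_1 = H_1(X,x)$ was taken. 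As $x$ ranges over a dense subset of $X$ and the construction of $H_1$ and of $L$ can be performed for a general point, these linear spaces of dimension $\ge m = S_X$ sweep out a dense subset of $X$, hence cover $X$ (a constructible dense subset of a variety covered by a bounded family of such subvarieties forces the whole variety to be covered, by a standard closure argument on the relevant incidence variety). This completes the induction.

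The one point that needs a little care — and which I expect to be the main (though minor) obstacle — is the genericity bookkeeping: Lemma~\ref{Lem3} produces a linear space through the \emph{fixed general} point $x$ used to define $H_1$, whereas to conclude that $X$ is \emph{covered} I need such linear spaces through a dense set of points. This is handled by noting that the whole chain and the choice of $L$ depend algebraically on $x$, so the locus of points of $X$ lying on a linear space of dimension $\ge S_X$ is constructible and contains a nonempty open set; since being covered by a family of subvarieties is a closed condition once one passes to the closure of the corresponding family in the Hilbert scheme (the images of the universal family over the closure still cover $X$, and a limit of linear spaces of dimension $d$ in $\mathbb{P}^N$ is again a linear space of dimension $d$), the covering extends to all of $X$. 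No new input beyond Lemmas~\ref{Lem1} and~\ref{Lem3} and the definition of $S_X$ is required.
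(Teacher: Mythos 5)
Your proposal is correct and follows essentially the same route as the paper: the paper's proof is exactly an induction along the chain $X \models H_1 \models \cdots \models H_m$, applying Lemma~\ref{Lem3} at each step to see that $H_i$ is covered by linear spaces of dimension $m-i$. Your extra care about the genericity and closure of the covering locus is a point the paper leaves implicit, but it does not change the argument.
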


\begin{proof}
Set $m:=S_X$. 
By definition of $S_X$, there is a chain 
$$X(=:H_0) \models H_1 \models \cdots \models H_m$$
of length $m$. 
By applying Lemma \ref{Lem3}, we inductively see that $H_i$ is covered by linear spaces of dimension $m-i$. 
\end{proof}

\begin{exam}\label{Exam3}
There is a Fano manifold $X \subset \mathbb{P}^N$ which is covered by linear spaces of dimension greater than $S_X$. 
For example, let $X$ be a complete intersection of two quadric hypersurfaces in $\mathbb{P}^7$. 
Then $X \models H$ is a complete intersection of two quadric hypersurfaces in $\mathbb{P}^4$. 
Since $H$ contains a line, Lemma \ref{Lem3} yields that $X$ is covered by linear spaces of dimension $2$. 
However, $H$ is not covered by lines, so $S_X=1$. 
\end{exam}

\begin{prop}\label{Prop2}
Let $X \subset \mathbb{P}^N$ be a Fano $n$-fold. 
\begin{enumerate}
\item $0 \le S_X \le n$ holds. 
\item If $S_X=n$, then $X$ is a linear space $\mathbb{P}^n$. 
\item If $n \ge 2$ and $S_X=n-1$, then $(X,\mathscr{O}_{\mathbb{P}^N}(1)|_X)$ is isomorphic to either of the following: 
\begin{itemize}
\item[(i)] $(\mathbb{P}^1\times \mathbb{P}^{n-1}, \mathscr{O}(d,1))$ for some $d\ge 1$;
\item[(ii)] $(\mathbb{P}(\mathscr{O}_{\mathbb{P}^1}(d+1)\oplus {\mathscr{O}_{\mathbb{P}^1}(d)}^{n-1}), \mathscr{O}(1))$ for some $d\ge 1$, where $\mathscr{O}(1)$ is the tautological line bundle.
\end{itemize}
\end{enumerate}
\end{prop}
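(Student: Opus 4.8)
The plan is as follows. Parts (1) and (2) are quick. For (1), along any chain $X=H_0\models H_1\models\cdots\models H_m$, Lemma \ref{Lem1} realises $H_{i+1}$ as a subvariety of $\mathbb{P}(T_pH_i^{\vee})\cong\mathbb{P}^{\dim H_i-1}$ (with $p$ a general point of $H_i$), so $\dim H_{i+1}\le\dim H_i-1$, and therefore $0\le\dim H_m\le n-m$, forcing $m\le n$; the bound $S_X\ge 0$ is immediate from the trivial chain $H_0=X$. For (2), if $S_X=n$ then a chain of length $n$ must have $\dim H_i=n-i$ at each step, so $\dim H_1=n-1$, i.e.\ $H_1=\mathbb{P}(T_xX^{\vee})$, and $X\cong\mathbb{P}^n$ by Lemma \ref{Lem2}\,(1).

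The real content is (3). Since $S_X=n-1<n$, part (2) gives $X\not\cong\mathbb{P}^n$. By Proposition \ref{Prop1}, $X$ is covered by linear subspaces of $\mathbb{P}^N$ of dimension at least $n-1$, and since any such subspace of dimension $n$ would be all of $X$ (forcing $X\cong\mathbb{P}^n$, which is excluded), $X$ is in fact covered by linear $\mathbb{P}^{n-1}$'s. As $n-1\ge\tfrac{n}{2}$ for $n\ge 2$, Theorem \ref{Thm1} applies and leaves three possibilities. The cases $Q^{2m}$ and $G(2,\mathbb{C}^{m+2})$ occur only when $m=n-1=\tfrac{n}{2}$, i.e.\ $n=2$: then $Q^2\cong\mathbb{P}^1\times\mathbb{P}^1$ with polarisation $\mathscr{O}(1,1)$, which is (i) with $d=1$, and $G(2,\mathbb{C}^3)\cong\mathbb{P}^2$ is excluded by $X\not\cong\mathbb{P}^n$. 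So it remains to treat the case where $X$ is a linear $\mathbb{P}^k$-bundle with $k\ge n-1$.

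There $k\le\dim X=n$, and $k=n$ would give $X\cong\mathbb{P}^n$; hence $k=n-1$ and $\pi\colon X\to B$ is a $\mathbb{P}^{n-1}$-bundle over a smooth projective curve $B$. Since $X$ is Fano, $H^1(X,\mathscr{O}_X)=0$ by Kodaira vanishing, and the Leray spectral sequence for $\pi$ then gives $H^1(B,\mathscr{O}_B)=0$, so $B\cong\mathbb{P}^1$. Thus $X\cong\mathbb{P}(\mathscr{E})$ with $\mathscr{E}=\bigoplus_{i=1}^n\mathscr{O}_{\mathbb{P}^1}(a_i)$, $a_1\ge\cdots\ge a_n$, and the linear-bundle hypothesis forces $\mathscr{O}_{\mathbb{P}^N}(1)|_X$ to be the tautological bundle $\mathscr{O}_{\mathbb{P}(\mathscr{E})}(1)$, whose ampleness gives $a_n\ge 1$. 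Writing $-K_X=n\xi+(2-\sum_i a_i)f$ with $\xi=c_1(\mathscr{O}_{\mathbb{P}(\mathscr{E})}(1))$ and $f$ a fibre, and intersecting with a section $C_0$ of minimal degree (the image of a projection $\mathscr{E}\twoheadrightarrow\mathscr{O}_{\mathbb{P}^1}(a_n)$ onto a lowest-degree summand, for which $\xi\cdot C_0=a_n$ and $f\cdot C_0=1$), ampleness of $-K_X$ forces $na_n+2-\sum_i a_i>0$, i.e.\ $\sum_i(a_i-a_n)\le 1$. Hence either all the $a_i$ equal some $d\ge 1$, in which case $\mathbb{P}(\mathscr{E})\cong\mathbb{P}^1\times\mathbb{P}^{n-1}$ with $\mathscr{O}_{\mathbb{P}(\mathscr{E})}(1)\cong\mathscr{O}(d,1)$, giving (i); or exactly one $a_i$ equals $a_n+1$, giving $\mathscr{E}\cong\mathscr{O}_{\mathbb{P}^1}(d+1)\oplus\mathscr{O}_{\mathbb{P}^1}(d)^{n-1}$ with $d=a_n\ge 1$, which is (ii).

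I expect the main obstacle to be not a single hard step but the accumulation of normalisation bookkeeping: identifying $\mathscr{O}_{\mathbb{P}^N}(1)|_X$ with the tautological bundle of the correct $\mathscr{E}$, checking that $\mathbb{P}(\mathscr{O}_{\mathbb{P}^1}(d)^{\oplus n})\cong\mathbb{P}^1\times\mathbb{P}^{n-1}$ carries $\mathscr{O}_{\mathbb{P}(\mathscr{E})}(1)$ to $\mathscr{O}(d,1)$, and deriving the sharp inequality $\sum_i(a_i-a_n)\le 1$ from the fact that $X$ is Fano. All the genuinely geometric input is absorbed into Proposition \ref{Prop1} and Theorem \ref{Thm1}, so once those are invoked the remainder is essentially elementary.
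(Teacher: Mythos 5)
Your argument is correct and follows essentially the same route as the paper: parts (1) and (2) are reduced to the chain/covering-by-linear-spaces observations, and part (3) combines Proposition \ref{Prop1} with Theorem \ref{Thm1} to reduce to a Fano $\mathbb{P}^{n-1}$-bundle $\mathbb{P}(\bigoplus\mathscr{O}_{\mathbb{P}^1}(a_i))$, where the Fano inequality $\sum_i(a_i-a_{\min})\le 1$ yields cases (i) and (ii). You merely spell out details the paper leaves implicit (e.g.\ that the $Q^{2m}$ and $G(2,\mathbb{C}^{m+2})$ cases of Theorem \ref{Thm1} only arise for $n=2$ and are absorbed into case (i) or excluded).
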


\begin{proof}
(1) and (2) follow from Proposition \ref{Prop1}. 
We assume $S_X=n-1 \ge 1$. 
Then, Proposition \ref{Prop1} and Theorem \ref{Thm1} imply that $X$ is a linear $\mathbb{P}^{n-1}$-bundle, so we get a vector bundle $\mathscr{E}$ on $\mathbb{P}^1$ such that $(X,\mathscr{O}_{\mathbb{P}^N}(1)|_X)$ is isomorphic to $(\mathbb{P}(\mathscr{E}), \mathscr{O}_{\mathbb{P}(\mathscr{E})}(1))$. 
We can write $\mathscr{E} \cong \bigoplus _{i=1}^n \mathscr{O}_{\mathbb{P}^1}(a_i)$ for some positive integers $a_1 \le \cdots \le a_n$. 
Since $X\cong \mathbb{P}(\mathscr{E})$ is a Fano manifold, we see that $\sum_{i=1}^n a_i \le n a_1+1$. 
Thus we obtain either (i) or (ii). 
\end{proof}

\begin{prop}[(1) and (2) of Theorem \ref{MainThm}]
Let $X \subset \mathbb{P}^N$ be a Fano $n$-fold of Picard number $1$. 
\begin{enumerate}
\item If $S_X > \frac{n}{2}$, then $X$ is a linear space $\mathbb{P}^n$. 
\item If $S_X = \frac{n}{2}$, then $X$ is either a quadric hypersurface $Q^{2m}$ or the Grassmannian $G(2,\mathbb{C}^{m+2})$ $(n=2m \ge 4)$. 
\end{enumerate}
\end{prop}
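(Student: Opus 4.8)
The plan is to reduce the statement to Sato's Theorem \ref{Thm1} by way of Proposition \ref{Prop1}. In both cases the hypothesis gives $S_X \ge \frac{n}{2}$, so Proposition \ref{Prop1} says that $X$ is covered by linear spaces of dimension $m \ge S_X \ge \frac{n}{2}$. Feeding this into Theorem \ref{Thm1}, I conclude that $X$ is one of: a linear $\mathbb{P}^k$-bundle with $k \ge m$; the quadric hypersurface $Q^{2m}$ with $m = \frac{n}{2}$; or the Grassmannian $G(2,\mathbb{C}^{m+2})$ with $m = \frac{n}{2}$.

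The next step uses the hypothesis $\rho_X = 1$ to collapse the bundle case. A linear $\mathbb{P}^k$-bundle has Picard number one greater than that of its (smooth projective) base, so $\rho_X = 1$ forces the base to be a point, and hence $X \cong \mathbb{P}^k = \mathbb{P}^n$. At this point the classification is essentially done, and it only remains to match the precise value of $S_X$ with the three candidates, for which I would invoke the explicit computations in Example \ref{Exam2}: $S_{\mathbb{P}^n} = n$, while $S_{Q^{2m}} = m$ and $S_{G(2,\mathbb{C}^{m+2})} = m$ (with $n = 2m$).

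For (1): since $S_X > \frac{n}{2}$ but both $S_{Q^{2m}}$ and $S_{G(2,\mathbb{C}^{m+2})}$ equal $\frac{n}{2}$, the quadric and the Grassmannian are excluded, leaving only $X \cong \mathbb{P}^n$. For (2): here $n = 2m$ is even and $m = S_X \ge 1$; the linear space $\mathbb{P}^n$ is excluded because $S_{\mathbb{P}^n} = n \ne \frac{n}{2}$ for $n \ge 2$, so $X$ is isomorphic to $Q^{2m}$ or $G(2,\mathbb{C}^{m+2})$. Finally $m \ge 2$, i.e.\ $n \ge 4$: if $n = 2$ then the only Fano surface of Picard number $1$ is $\mathbb{P}^2$, which has invariant $2 \ne 1 = \frac{n}{2}$, so the case $m = 1$ does not arise.

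I do not anticipate a genuine obstacle here; the argument is bookkeeping layered on top of the deep input Theorem \ref{Thm1}. The only points needing a little care are checking that the edge case $n = 2$ of (2) is vacuous rather than a spurious exception, and that $Q^{2m}$ and $G(2,\mathbb{C}^{m+2})$ indeed have Picard number $1$ and the stated invariant — both handled by Example \ref{Exam2} together with the standard fact that these manifolds have Picard number $1$.
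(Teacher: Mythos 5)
Your argument is correct and is essentially the paper's own proof, which simply states that the claims follow immediately from Proposition \ref{Prop1} and Theorem \ref{Thm1}; you have just made the bookkeeping explicit (collapsing the $\mathbb{P}^k$-bundle case via $\rho_X=1$, separating the cases by the value of $S_X$, and checking that $n=2$ is vacuous). No gaps.
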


\begin{proof}
The arguments immediately follow from Proposition \ref{Prop1} and Theorem \ref{Thm1}. 
\end{proof}

\section{Main result}

In this section, we prove (3) of Theorem \ref{MainThm}: 

\begin{thm}
Let $X \subset \mathbb{P}^N$ be a Fano manifold of Picard number $1$ and dimension $n=2m+1\ge 3$. 
Assume that Conjecture \ref{Conj} is true and $S_X = m$. 
Then $X$ is isomorphic to one of the following: 
\begin{itemize}
\item[(a)] a quadric hypersurface $Q^{2m+1}$;
\item[(b)] the symplectic Grassmannian $SG(2,\mathbb{C}^{m+3})$;
\item[(c)] a cubic hypersurface in $\mathbb{P}^4$ $(m=1)$; 
\item[(d)] a complete intersection of two quadric hypersurfaces in $\mathbb{P}^5$ $(m=1)$; 
\item[(e)] a $3$-dimensional linear section of the Grassmannian $G(2,\mathbb{C}^{5})$ in $\mathbb{P}^9$ $(m=1)$.
\end{itemize}
\end{thm}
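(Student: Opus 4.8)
The plan is to run a case analysis on the dimension $d:=\dim H_1$ of a family $H_1$ of lines on $X$ through a general point $x$. Since $\rho_X=1$ every line of $X$ has the same anti-canonical degree, so by Lemma \ref{Lem1} \emph{every} family of lines through $x$ has dimension $d=(-K_X\cdot H_1)-2$; moreover $S_X=m$ forces $S_H\le m-1$ for every such family $H$, while for our chosen $H_1$ we have $S_{H_1}=m-1$ together with a continuing chain $H_1\models H_2\models\cdots\models H_m$. Note also that $X\ne\mathbb P^n$, since $S_X=m<n$ (Proposition \ref{Prop2}(2)).

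First treat large $d$ via Lemma \ref{Lem2}: if $d=n-1$ then $X=\mathbb P^n$, excluded; if $d=n-2$ then $X\cong Q^n$, and $S_{Q^n}=m$ by Example \ref{Exam2}, so this is case (a); if $d=n-3$ then $X$ is a cubic hypersurface, a complete intersection of two quadrics, or $G(2,\mathbb C^5)$ or a linear section of it. For each of these one computes $S$ directly from Example \ref{Exam1} — e.g.\ a cubic $n$-fold has $H_1$ a complete intersection of degrees $(2,3)$ in $\mathbb P^{n-1}$, and $S_X<m$ whenever $n\ge5$ — and the outcome is: if $n=3$ one gets cases (c), (d), (e); if $n=5$ one gets $X\cong SG(2,\mathbb C^5)$ (a hyperplane section of $G(2,\mathbb C^5)$), i.e.\ case (b); and if $n\ge7$ no such $X$ occurs. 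So from now on $d\le n-4$, which in particular forces $m\ge3$.

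By Proposition \ref{Prop2}(1), $d\ge S_{H_1}=m-1$; if $d=m-1=S_{H_1}$ then Proposition \ref{Prop2}(2) would force $H_1\cong\mathbb P^{m-1}$, a linear space of positive dimension (as $m\ge2$), contradicting Lemma \ref{Lem5}. Hence $m\le d\le n-4=2m-3$, so $d/2\le m-1$; since $H_1$ is covered by linear spaces of dimension $\ge S_{H_1}=m-1$ (Proposition \ref{Prop1}), Theorem \ref{Thm1} applies to $H_1$, and because $d\le 2m-3<2(m-1)$ its quadric and Grassmannian alternatives (which require $\dim H_1=2(m-1)$) are impossible: $H_1$ is a linear $\mathbb P^k$-bundle over some base $Y$ with $k\ge m-1$. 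By Lemma \ref{Lem3} the fibres make $X$ covered by linear $\mathbb P^{k+1}$'s, so $k+1\le m$ — otherwise $X$ would be covered by linear spaces of dimension $\ge m+1$, and Theorem \ref{Thm1} together with $\rho_X=1$ would give $X=\mathbb P^n$. Thus $k=m-1$ and $\dim Y=d-(m-1)$.

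The crux, and the step I expect to be the main obstacle, is to prove $\dim Y=1$, i.e.\ $d=m$: the range $m<d\le 2m-3$ is not excluded by the tools above, since we have no classification of $H_1$ beyond ``$\mathbb P^{m-1}$-bundle over $Y$''. The plan here is to study the normal bundle $N$ of a general covering linear space $\mathbb P^m\subset X$: $N$ is globally generated, hence nef, its splitting type along a line is $\bigoplus\mathscr O(b_i)$ with $b_i\ge0$ and $\sum b_i=\dim Y$, and, using that $X$ is covered by $\mathbb P^m$'s but not by $\mathbb P^{m+1}$'s, that $\rho_X=1$, and that $S_{H_1}=m-1$ (in the spirit of \cite[Proposition 6.4]{NO}), one rules out $\sum b_i\ge2$. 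Granting $d=m$, we have $S_{H_1}=m-1=\dim H_1-1$, so Proposition \ref{Prop2}(3) gives $H_1=\mathbb P(\mathscr E)$ with $\mathscr E=\bigoplus_{i=1}^m\mathscr O_{\mathbb P^1}(a_i)$, $1\le a_1\le\cdots\le a_m$ (after a twist, $\mathscr O_{\mathbb P^{2m}}(1)|_{H_1}=\mathscr O_{\mathbb P(\mathscr E)}(1)$). Non-degeneracy of $H_1\subset\mathbb P(T_xX^\vee)=\mathbb P^{2m}$ (Lemma \ref{Lem4}) gives $\sum(a_i+1)=h^0(\mathscr E)\ge 2m+1$; the Fano condition gives $\sum a_i\le m a_1+1$; and when $a_1\ge2$ the complete embedding of $\mathbb P(\mathscr E)$ has non-defective secant variety, hence admits no isomorphic projection into $\mathbb P^{2m}$ unless $h^0(\mathscr E)=2m+1$. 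Together these force $a_1=1$ and $\sum a_i=m+1$, i.e.\ $\mathscr E\cong\mathscr O_{\mathbb P^1}(2)\oplus\mathscr O_{\mathbb P^1}(1)^{m-1}$ with the complete tautological embedding — precisely the hypothesis of Conjecture \ref{Conj}, which then yields $X\cong SG(2,\mathbb C^{m+3})$, case (b).
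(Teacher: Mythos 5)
Your overall route is genuinely different from the paper's (you case-split on $d=\dim H_1$, whereas the paper splits on whether some member $H_i$ of the chain $X\models H_1\models\cdots\models H_m$ is a linear space in $P_i$), and most of your steps check out: the treatment of $d\ge n-3$ via Lemma \ref{Lem2} and Example \ref{Exam1}, the exclusion of $d=m-1$ via Proposition \ref{Prop2} and Lemma \ref{Lem5}, the application of Theorem \ref{Thm1} to $H_1$ to get a linear $\mathbb{P}^{m-1}$-bundle, and the endgame (non-degeneracy from Lemma \ref{Lem4}, the Fano inequality $\sum a_i\le ma_1+1$, the secant-variety bound of Lemma \ref{Lem6}, then Conjecture \ref{Conj}) all agree in substance with the paper. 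But the step you yourself flag as the crux --- ruling out $m<d\le 2m-3$ --- is a genuine gap, not a proof. The normal-bundle sketch does not go through as stated: global generation of the normal bundle $N$ of a general covering $\mathbb{P}^m$ is not justified, and the cited \cite[Proposition 6.4]{NO} only excludes \emph{trivial} nef normal bundles of covering linear spaces of dimension $\ge\frac{n-1}{2}$; it gives no handle on excluding $\det N|_\ell\ge 2$. Since $\det N|_\ell=\dim Y\ge 1$ automatically in your situation, that proposition is simply silent here, and no argument is offered for the claim ``one rules out $\sum b_i\ge 2$.''

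The good news is that the gap closes in two lines with the tool the paper actually uses at the corresponding point, namely Barth--Larsen \cite{BL}: you have already shown that $H_1$ is a linear $\mathbb{P}^{m-1}$-bundle over a base $Y$ with $\dim Y=d-m+1\ge 1$, hence $\rho_{H_1}\ge 2$; but $H_1\subset P_1\cong\mathbb{P}^{2m}$ is a smooth $d$-fold, so if $d\ge m+1$ then Barth--Larsen forces $\operatorname{Pic}(H_1)\cong\mathbb{Z}$, a contradiction. Hence $d=m$ and your final paragraph takes over. I would also note the structural trade-off: the paper never needs to prove $\dim H_1=m$ for an arbitrary chain --- its Case 2 localizes at the first index $i$ where $H_i$ becomes a linear space and uses Lemma \ref{Lem5} plus Barth--Larsen to force $i=2$ and $n_1=m$ --- whereas your version front-loads everything onto $H_1$, which is cleaner once the Barth--Larsen step is inserted but collapses without it.
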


\begin{proof}
Since $S_X=m$, there is a chain 
$$X(=:H_0) \models H_1 \models \cdots \models H_m$$
of length $m$. 
Set $n_i:={\rm dim}\,H_i$. 
Since $X$ is not a linear space, Lemma \ref{Lem2}(1) yields $n_0-n_1 \ge 2$. 
Let $P_i$ be the projectivised tangent space of $H_{i-1}$ containing $H_i$. 
Note that $P_i$ is isomorphic to a projective space of dimension $n_{i-1}-1$. 

\begin{case}
$H_i$ is not a linear space in $P_i$ for any $1 \le i \le m-1$. 
\end{case}

In this case, $n_i-n_{i+1} \ge 2$ for any $1 \le i \le m-1$, so we have: 
$$n_0-n_1 \le (2m+1)-2(m-1)=3.$$
Thus, Lemma \ref{Lem2} implies that $X$ is one of the following: 
\begin{itemize}
\item[(i)] a quadric hypersurface $Q^{2m+1}$;
\item[(ii)] a cubic hypersurface in $\mathbb{P}^{2m+2}$; 
\item[(iii)] a complete intersection of two quadric hypersurfaces in $\mathbb{P}^{2m+3}$; 
\item[(iv)] a $3$-dimensional linear section of the Grassmannian $G(2,\mathbb{C}^{5})$ in $\mathbb{P}^9$ ($m=1$);
\item[(v)] a hyperplane section of the Grassmannian $G(2,\mathbb{C}^{5})$ in $\mathbb{P}^9$ ($m=2$).
\end{itemize}
According to the above inequality, if $n_0-n_1 = 3$ and $m \ge 2$, then $n_1-n_2=2$ must holds. 
However, in case (ii) (resp.\ (iii)), if $m \ge 2$, then $H_1$ is a complete intersection of a cubic hypersuface and a quadric hypersurface (resp.\ two quadric hypersufaces) in $\mathbb{P}^{2m}$, so we cannot take $H_2$ when $m=2$, and $n_1-n_2 = 4$ (resp.\ $3$) when $m \ge 3$. Thus, in case (ii) (resp.\ (iii)), we obtain $m=1$, so $X$ is (c) (resp.\ (d)). 
Notice that (v) is isomorphic to (b) $SG(2,\mathbb{C}^5)$. 

\begin{case}
$H_i$ is a linear space in $P_i$ for some $1 \le i \le m-1$. 
\end{case}

In this case, $H_{i+j}(=P_{i+j})$ has dimension $n_i-j$ for any $j \ge 1$, and $H_m$ is a point, so $n_i=m-i$. 
By taking the minimum $i$, we may assume that $H_{i-1}$ is not a linear space in $P_{i-1}$. 

We show $\rho_{H_{i-1}} \ge 2$. 
We assume by contradiction that $\rho_{H_{i-1}} =1$.  
Then Lemma \ref{Lem5} implies $\frac{n_{i-1}-4}{2} \ge n_i (=m-i)$, so
$$2m+1 \ge 2(i-1)+n_{i-1} \ge 2(i-1)+2(m-i)+4=2m+2,$$
a contradiction. 
Thus we obtain $\rho_{H_{i-1}} \ge 2$. 
In particular, $i \ge 2$ and $m \ge 3$. 
Moreover, since $H_{i-1}$ is embedded in a projective space $P_{i-1}$ of dimension  $n_{i-2}-1$, we have $\frac{n_{i-2}}{2} \ge n_{i-1}(\ge 2+m-i)$ by Barth and Larsen's theorem \cite{BL}. 

We know that none of the manifolds in Lemma \ref{Lem2} satisfies the condition of this case, so $n_0-n_1 \ge 4$. 
Hence, if $i \ge 3$, then
$$2m+1 \ge 4+2(i-3)+n_{i-2} \ge 4+2(i-3)+2(2+m-i) = 2m+2,$$
a contradiction. 
Thus we get $i=2$, and the inequalities $\frac{n_0}{2} \ge n_1\ge m$ yield $n_1=m$. 
By applying Lemma \ref{Lem4}, we have that $H_1 \subset P_1$ is non-degenerate. 
Furthermore, since $S_{H_1}=m-1=n_1-1$, Proposition \ref{Prop2} implies that $(H_1,\mathscr{O}_{P_1}(1)|_{H_1})$ is isomorphic to either of the following: 
\begin{itemize}
\item[(i)] $(\mathbb{P}^1\times \mathbb{P}^{m-1}, \mathscr{O}(d,1))$ for some $d\ge 1$;
\item[(ii)] $(\mathbb{P}(\mathscr{O}_{\mathbb{P}^1}(d+1)\oplus {\mathscr{O}_{\mathbb{P}^1}(d)}^{m-1}), \mathscr{O}(1))$ for some $d\ge 1$, where $\mathscr{O}(1)$ is the tautological line bundle.
\end{itemize}
Since $H_1$ is embedded in a projective space $P_1$ of dimension $2m$, in both cases, we have $d=1$ by the following Lemma \ref{Lem6}. 
In case (i), the linear span of $H_1$ in $P_1$ has dimension $2m-1$, so $H_1 \subset P_1$ is degenerate, that is a contradiction. 
Therefore, the case (i) does not occur. 
On the other hand, in case (ii), we obtain that $X$ is isomorphic to (b) $SG(2,\mathbb{C}^{m+3})$ by applying Conjecture \ref{Conj}. 
\end{proof}

\begin{lem}[{see \cite[Lemma 3.1]{Ca}}]\label{Lem6}
\ 
\begin{enumerate}
\item Let $S$ be $\mathbb{P}^1\times \mathbb{P}^{m-1}$ embedded in $\mathbb{P}^{dm+m-1}$ by $\mathscr{O}(d,1)$. 
If $d\ge 2$, then the secant variety of $S$ has dimension $2m+1$. 
\item Let $S$ be $\mathbb{P}(\mathscr{O}_{\mathbb{P}^1}(d+1)\oplus {\mathscr{O}_{\mathbb{P}^1}}(d)^{m-1})$ embedded in $\mathbb{P}^{dm+m}$ by the tautological line bundle. 
If $d\ge 2$, then the secant variety of $S$ has dimension $2m+1$. 
\end{enumerate} 
\end{lem}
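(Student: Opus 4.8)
Both varieties $S$ in (1) and (2) have dimension $m$, and since $d\ge 2$ the ambient projective space has dimension at least $2m+1$; hence $\dim\mathrm{Sec}(S)\le 2m+1$, and the content of the lemma is the reverse inequality. I would establish it by identifying each $S$ with its standard projective model and then arguing by slightly different routes in the two cases — a determinantal description in case (1), and Terracini's lemma in case (2).

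For (1), realize the embedding of $S=\mathbb{P}^1\times\mathbb{P}^{m-1}$ by $\mathscr{O}(d,1)$ as the degree-$d$ Veronese on the first factor followed by the Segre embedding, so that $S$ becomes $C_d\times\mathbb{P}^{m-1}\subset\mathbb{P}(\mathbb{C}^{d+1}\otimes\mathbb{C}^m)$, where $C_d\subset\mathbb{P}^d=\mathbb{P}(\mathbb{C}^{d+1})$ is the rational normal curve. Viewing $\mathbb{C}^{d+1}\otimes\mathbb{C}^m$ as the space of $(d+1)\times m$ matrices, a point of $S$ is a rank-one matrix $au^{\mathsf T}$ with $[a]\in C_d$, and a general secant point $au^{\mathsf T}+bv^{\mathsf T}$ (with $[a],[b]\in C_d$ distinct) is a matrix of rank $\le 2$ whose column space is the chord $\langle a,b\rangle$ of $C_d$; conversely every rank-$\le 2$ matrix with column space spanned by two points of $C_d$ arises this way, so $\mathrm{Sec}(S)$ is exactly the locus of such matrices. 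If $d=2$, every $2$-plane in $\mathbb{C}^3$ is a chord of the conic $C_2$, so $\mathrm{Sec}(S)$ is the whole rank-$\le 2$ determinantal variety, of dimension $2(3+m-2)-1=2m+1$. If $d\ge 3$, the chords of $C_d$ form a $2$-dimensional family (distinct points of $C_d$ span distinct lines, since $C_d$ has no trisecant lines), and $\mathrm{Sec}(S)$ maps onto this family with $\mathbb{P}^{2m-1}$-fibers via a generically finite projection (a general rank-$2$ matrix determines its column space), whence $\dim\mathrm{Sec}(S)=2+(2m-1)=2m+1$.

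For (2), I would use Terracini's lemma: for general $p,q\in S$ one has $\dim\mathrm{Sec}(S)=\dim\langle\mathbb{T}_pS,\mathbb{T}_qS\rangle$, where $\mathbb{T}_\bullet S$ is the embedded projective tangent space, of dimension $m$; so it suffices that these two tangent spaces be disjoint. Present $S=\mathbb{P}(\mathscr{O}_{\mathbb{P}^1}(d+1)\oplus\mathscr{O}_{\mathbb{P}^1}(d)^{\oplus(m-1)})$, embedded by the tautological bundle, as a rational normal scroll: with $\mathbb{C}^{N+1}=V_1\oplus\cdots\oplus V_m$, $\dim V_1=d+2$, $\dim V_i=d+1$ for $i\ge 2$, and rational normal curves $\phi_i\colon\mathbb{P}^1\to\mathbb{P}(V_i)$, the variety $S$ is the union over $t\in\mathbb{P}^1$ of the $(m-1)$-planes $\langle\phi_1(t),\ldots,\phi_m(t)\rangle$, and the affine tangent space at $p=\sum_i\lambda_i\phi_i(s)$ is spanned by $\phi_1(s),\ldots,\phi_m(s)$ together with $\sum_i\lambda_i\phi_i'(s)$. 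Writing $q=\sum_i\mu_i\phi_i(t)$ with $s\ne t$, I would check that the $2m+2$ vectors $\phi_1(s),\ldots,\phi_m(s),\phi_1(t),\ldots,\phi_m(t),\sum_i\lambda_i\phi_i'(s),\sum_i\mu_i\phi_i'(t)$ are linearly independent: projecting a relation onto the summand $V_1$ gives a relation among $\phi_1(s),\phi_1'(s),\phi_1(t),\phi_1'(t)$, which are independent in $V_1$ because $\dim V_1=d+2\ge 4$ and two distinct tangent lines of $C_{d+1}$ are disjoint; since $\lambda_1,\mu_1\ne 0$ for general $p,q$, the coefficients of the last two vectors vanish, and then all remaining coefficients vanish. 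Thus $\dim\langle\mathbb{T}_pS,\mathbb{T}_qS\rangle=2m+1$.

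The work here is organizational rather than deep: one must pin down the standard projective models and then run the linear-algebra arguments. In case (1) the determinantal picture makes everything transparent, the only subtlety being that the behaviour at $d=2$ differs from that at $d\ge 3$; in case (2) the point is that the direct-sum decomposition $\bigoplus_i V_i$ isolates a block of dimension at least $4$, which is what makes the tangent-space computation go through uniformly for all $d\ge 2$. Alternatively, the statement is recorded in \cite[Lemma 3.1]{Ca}.
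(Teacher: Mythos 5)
The paper does not prove this lemma at all: it is stated as a quotation of \cite[Lemma 3.1]{Ca}, so your self-contained argument is necessarily a ``different route,'' and it is essentially correct. In (1) your identification of $S$ with $C_d\times\mathbb{P}^{m-1}$ under the Segre map and the description of $\mathrm{Sec}(S)$ as the rank-$\le 2$ matrices whose column space is a chord of $C_d$ are right, as are both dimension counts ($2(m+1)-1=2m+1$ for $d=2$; $2+(2m-1)=2m+1$ for $d\ge 3$). Two small points: the phrase ``generically finite projection'' is misleading --- the map from (an open set of) $\mathrm{Sec}(S)$ to the family of chords has $(2m-1)$-dimensional fibers and is certainly not finite; what you mean, and what the count uses, is that the incidence variety $\{(W,M)\}$ maps \emph{birationally} onto $\mathrm{Sec}(S)$ because a general rank-$2$ matrix determines $W$ and, for $d\ge 3$, $W$ determines the pair of points on $C_d$. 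Also, your $d=2$ case tacitly needs $m\ge 2$ for rank-$2$ matrices of size $3\times m$ to exist; indeed for $(m,d)=(1,2)$ the statement itself fails (the secant variety of a plane conic is $\mathbb{P}^2$), but this is harmless since the paper only invokes the lemma for $m\ge 3$. In (2) the Terracini argument is correct and cleanly isolates where $d\ge 2$ enters: the block $V_1$ has dimension $d+2\ge 4$, so the degree-$4$ divisor $2s+2t$ on the rational normal curve $C_{d+1}$ spans a $\mathbb{P}^3$, forcing $\phi_1(s),\phi_1'(s),\phi_1(t),\phi_1'(t)$ to be independent and hence the two embedded tangent spaces to be disjoint. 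Your proof buys a reader independence from \cite{Ca}; citing \cite{Ca} buys brevity and also covers higher secant varieties, which the paper does not need.
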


\end{document}